\theoremstyle{plain}
\newtheorem{theorem}{Theorem}
\newtheorem{lemma}[theorem]{Lemma}
\theoremstyle{definition}
\theoremstyle{remark}
\newtheorem{example}[theorem]{Example}
\providecommand{\abs}[1]{\lvert#1\rvert}
\begin{document}

\title[Duality theorems]{A note on duality theorems in mass transportation}

\author{Pietro Rigo}
\address{Pietro Rigo, Dipartimento di Matematica ``F. Casorati", Universita' di Pavia, via Ferrata 1, 27100 Pavia, Italy}
\email{pietro.rigo@unipv.it}

\keywords{Duality theorem, Mass transportation, Perfect probability measure, Probability measure with given marginals, Separable probability measure.}

\subjclass[2010]{60A10, 60E05, 28A35}

\begin{abstract}
The duality theory of the Monge-Kantorovich transport problem is investigated in an abstract measure theoretic framework. Let $(\mathcal{X},\mathcal{F},\mu)$ and $(\mathcal{Y},\mathcal{G},\nu)$ be any probability spaces and $c:\mathcal{X}\times\mathcal{Y}\rightarrow\mathbb{R}$ a measurable cost function such that $f_1+g_1\le c\le f_2+g_2$ for some $f_1,\,f_2\in L_1(\mu)$ and $g_1,\,g_2\in L_1(\nu)$. Define $\alpha(c)=\inf_P\int c\,dP$ and $\alpha^*(c)=\sup_P\int c\,dP$, where $\inf$ and $\sup$ are over the probabilities $P$ on $\mathcal{F}\otimes\mathcal{G}$ with marginals $\mu$ and $\nu$. Some duality theorems for $\alpha(c)$ and $\alpha^*(c)$, not requiring $\mu$ or $\nu$ to be perfect, are proved. As an example, suppose $\mathcal{X}$ and $\mathcal{Y}$ are metric spaces and $\mu$ is separable. Then, duality holds for $\alpha(c)$ (for $\alpha^*(c)$) provided $c$ is upper-semicontinuous (lower-semicontinuous). Moreover, duality holds for both $\alpha(c)$ and $\alpha^*(c)$ if the maps $x\mapsto c(x,y)$ and $y\mapsto c(x,y)$ are continuous, or if $c$ is bounded and $x\mapsto c(x,y)$ is continuous. This improves the existing results in \cite{RR1995} if $c$ satisfies the quoted conditions and the cardinalities of $\mathcal{X}$ and $\mathcal{Y}$ do not exceed the continuum.
\end{abstract}

\maketitle

\section{Introduction}\label{s576bd}

Throughout, $(\mathcal{X},\mathcal{F},\mu)$ and $(\mathcal{Y},\mathcal{G},\nu)$ are probability spaces and
\begin{gather*}
\mathcal{H}=\mathcal{F}\otimes\mathcal{G}
\end{gather*}
is the product $\sigma$-field on $\mathcal{X}\times\mathcal{Y}$. Further, $\Gamma(\mu,\nu)$ is the collection of probability measures $P$ on $\mathcal{H}$ with marginals $\mu$ and $\nu$, namely,
\begin{gather*}
P(A\times\mathcal{Y})=\mu(A)\quad\text{and}\quad P(\mathcal{X}\times B)=\nu(B)\quad\text{for all }A\in\mathcal{F}\text{ and }B\in\mathcal{G}.
\end{gather*}

For any probability space $(\Omega,\mathcal{A},Q)$, we write $L_1(Q)$ to denote the class of $\mathcal{A}$-measurable and $Q$-integrable functions $\phi:\Omega\rightarrow\mathbb{R}$ (without identifying maps which agree $Q$-a.s.). We also write $Q(\phi)=\int\phi\,dQ$ for $\phi\in L_1(Q)$.

With a slight abuse of notation, for any maps $f:\mathcal{X}\rightarrow\mathbb{R}$ and $g:\mathcal{Y}\rightarrow\mathbb{R}$, we still denote by $f$ and $g$ the functions on $\mathcal{X}\times\mathcal{Y}$ given by $(x,y)\mapsto f(x)$ and $(x,y)\mapsto g(y)$. Thus, $f+g$ is the map on $\mathcal{X}\times\mathcal{Y}$ defined as
\begin{gather*}
(f+g)(x,y)=f(x)+g(y)\quad\text{for all }(x,y)\in\mathcal{X}\times\mathcal{Y}.
\end{gather*}
In this notation, we let
\begin{gather*}
L=\{f+g:f\in L_1(\mu),\,g\in L_1(\nu)\}.
\end{gather*}

Let $c:\mathcal{X}\times\mathcal{Y}\rightarrow\mathbb{R}$ be an $\mathcal{H}$-measurable function satisfying
\begin{gather}\label{new7}
f_1+g_1\le c\le f_2+g_2\quad\quad\text{for some }f_1+g_1\in L\text{ and }f_2+g_2\in L.
\end{gather}
For such a $c$, we define
\begin{gather*}
\alpha(c)=\inf\,\bigl\{P(c):\,P\in\Gamma(\mu,\nu)\bigr\},
\\\alpha^*(c)=\sup\,\bigl\{P(c):\,P\in\Gamma(\mu,\nu)\bigr\},
\\\beta(c)=\sup\,\bigl\{\mu(f)+\nu(g):\,f+ g\in L,\,f+ g\le c\bigr\},
\\\beta^*(c)=\inf\,\bigl\{\mu(f)+\nu(g):\,f+ g\in L,\,f+ g\ge c\bigr\}.
\end{gather*}
It is not hard to see that
\begin{gather*}
\beta(c)\le\alpha(c)\le\alpha^*(c)\le\beta^*(c).
\end{gather*}

A duality theorem (for both $\alpha(c)$ and $\alpha^*(c)$) is the assertion that
\begin{gather}\label{goal}
\alpha(c)=\beta(c)\quad\text{and}\quad\alpha^*(c)=\beta^*(c).
\end{gather}
Indeed, duality theorems arise in a plenty of frameworks. The main one is possibly mass transportation, where $c(x,y)$ is regarded as the cost for moving a unit of good from $x\in\mathcal{X}$ into $y\in\mathcal{Y}$. However, duality results play a role even in risk theory, optimization problems and dependence modeling. See e.g. \cite{AGS}, \cite{BESC}, \cite{BLS}, \cite{FTAP}, \cite{BPRS}, \cite{KEL}, \cite{PW}, \cite{PRWW}, \cite{LIBRO}, \cite{RUS}, \cite{VIL} and references therein.

Starting from Kantorovich himself \cite{KANT}, there is a long line of research on duality theorems; see again \cite{BESC}, \cite{BLS}, \cite{VIL} and references therein. To our knowledge, {\em under the present assumptions on} $c$, the best result is due to Ramachandran and Ruschendorf \cite{RR1995}. According to the latter, one obtains both $\alpha(c)=\beta(c)$ and $\alpha^*(c)=\beta^*(c)$ provided $c$ is $\mathcal{H}$-measurable, it satisfies condition \eqref{new7}, and at least one between $\mu$ and $\nu$ is perfect.

Now, some form of condition \eqref{new7} can not be dispensed while removing measurability leads to involve inner and outer measures; see \cite[Section 2]{KEL}. Instead, whether the perfectness assumption can be dropped is still an {\em open problem}. Thus, if $c$ is measurable and meets \eqref{new7} but $\mu$ and $\nu$ are both non-perfect, it is currently unknown whether condition \eqref{goal} is true or false. See points (2)-(3), page 355, of \cite{RR2000}.

This paper provides duality theorems not requiring perfectness.

Suppose $\mathcal{X}$ and $\mathcal{Y}$ are metric spaces and $\mathcal{F}$ and $\mathcal{G}$ the Borel $\sigma$-fields. Then, condition \eqref{goal} is shown to be true if at least one of $\mu$ and $\nu$ is separable, $c$ meets \eqref{new7} and all the $c$-sections are continuous. Or else, condition \eqref{goal} holds if $\mu$ and $\nu$ are both separable, $c$ is bounded and measurable, and at least one of the $c$-sections is continuous. These results improve \cite{RR1995} when $c$ satisfies the quoted assumptions and the cardinalities of $\mathcal{X}$ and $\mathcal{Y}$ do not exceed the continuum. Under the latter condition, in fact, a perfect probability measure is separable but not conversely. Note also that, if $\mathcal{X}$ and $\mathcal{Y}$ are separable metric spaces (so that separability of $\mu$ and $\nu$ is automatic) the scope of our results is to replace assumptions on $\mu$ or $\nu$ (required by \cite{RR1995}) with assumptions on $c$.

Various conditions for $\alpha(c)=\beta(c)$ {\em or} $\alpha^*(c)=\beta^*(c)$, but not necessarily for both, are given as well. For instance, if $c$ meets \eqref{new7} and at least one of $\mu$ and $\nu$ is separable, then $\alpha^*(c)=\beta^*(c)$  or $\alpha(c)=\beta(c)$ provided $c$ is lower or upper semicontinuous. As another example, $\alpha^*(1_H)=\beta^*(1_H)$ if $H=\cup_n (A_n\times B_n)$ with $A_n\in\mathcal{F}$ and $B_n\in\mathcal{G}$. Further, $\alpha(1_H)=\beta(1_H)$ if $\mu(\limsup_nA_n)=0$ or $\nu(\limsup_nB_n)=0$. Without some extra condition, however, we do not know whether $\alpha(1_H)=\beta(1_H)$.

\section{Preliminaries}\label{prel}

For any topological space $S$, the Borel $\sigma$-field on $S$ is denoted by $\mathcal{B}(S)$.

Let $(\Omega,\mathcal{A},Q)$ be a probability space. Then, $Q$ is {\em perfect} if, for any $\mathcal{A}$-measurable $\phi:\Omega\rightarrow\mathbb{R}$, there is $B\in\mathcal{B}(\mathbb{R})$ such that $B\subset\phi(\Omega)$ and $Q(\phi\in B)=1$.

An important special case is $\Omega$ a metric space and $\mathcal{A}=\mathcal{B}(\Omega)$. In that case, $Q$ is {\em separable} if $Q(A)=1$ for some separable $A\in\mathcal{A}$ and $Q$ is {\em tight} if $Q(A)=1$ for some $\sigma$-compact $A\in\mathcal{A}$. Clearly, tightness implies separability but not conversely. Furthermore, tightness is equivalent to perfectness provided $\Omega$ satisfies the following condition:
\begin{itemize}

\item[ ] The power set of $\Omega$ does not support any 0-1-valued probability measure $T$ such that $T\{\omega\}=0$ for each $\omega\in\Omega$;

\end{itemize}
see \cite[Theorem 3.2]{KOUM}.

Two remarks are in order. First, the above condition on $\Omega$ is automatically true if card$(\Omega)\le\,$card$(\mathbb{R})$. Thus, perfectness implies separability, but not conversely, if card$(\Omega)\le\,$card$(\mathbb{R})$ (in particular, if $\Omega$ is a separable metric space). Second, it is consistent with the usual axioms of set theory (ZFC) that, for any metric space $\Omega$, any probability measure on $\mathcal{B}(\Omega)$ is separable.

Note also that a simple example of non perfect probability measure is any non tight probability measure on the Borel sets of a separable metric space. For instance, take $Q$ the outer Lebesgue measure on $\mathcal{B}(\Omega)$, where $\Omega$ is a subset of $[0,1]$ with outer Lebesgue measure 1 and inner Lebesgue measure 0. Then, $Q$ is not perfect.

Let us come back to duality theorems. Define
\begin{gather*}
M=\bigl\{\mathcal{H}\text{-measurable functions }c:\mathcal{X}\times\mathcal{Y}\rightarrow\mathbb{R}\text{ satisfying condition \eqref{new7}}\bigr\}
\end{gather*}
and note that
\begin{gather*}
\alpha^*(c)=-\alpha(-c)\quad\text{and}\quad\beta^*(c)=-\beta(-c)\quad\text{for all }c\in M.
\end{gather*}
Thus, to get condition \eqref{goal}, it suffices to show $\alpha(c)=\beta(c)$ under some conditions which hold true for both $c$ and $-c$.

Two preliminary lemmas are needed. The first is inspired to \cite[Lemma 1]{HS}.

\begin{lemma}\label{chi55}
Let $c\in M$. Then, $\beta^*(c)=\lim_n\beta^*(c_n)$ whenever $(c_n)\subset M$ is an increasing sequence such that $c_n\uparrow c$ pointwise.
\end{lemma}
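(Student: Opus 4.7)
The plan is to prove the asserted equality by establishing the two inequalities separately.

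First I would dispose of the trivial direction $\ell := \lim_n \beta^*(c_n) \le \beta^*(c)$ by observing that $\beta^*$ is monotone in its argument: since $c_n \le c$, any admissible competitor $f + g \in L$ with $f + g \ge c$ also satisfies $f + g \ge c_n$, so $\beta^*(c_n) \le \beta^*(c)$ for every $n$. The sequence $(\beta^*(c_n))$ is thus nondecreasing and bounded above by $\mu(f_2) + \nu(g_2)$, so $\ell$ exists and $\ell \le \beta^*(c)$.

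For the reverse inequality $\beta^*(c) \le \ell$, I would first verify that $\beta^*$ is subadditive on $M$: given $a, b \in M$ and $\delta > 0$, picking $h_a, h_b \in L$ with $h_a \ge a$, $h_b \ge b$ and costs within $\delta$ of $\beta^*(a)$, $\beta^*(b)$ respectively, the element $h_a + h_b \in L$ bounds $a + b$ from above with cost within $2\delta$ of $\beta^*(a) + \beta^*(b)$. Applying subadditivity to the decomposition $c = c_n + (c - c_n)$ --- valid because $c - c_n \in M$, being sandwiched between $0$ and $(f_2 - f_1) + (g_2 - g_1) \in L$ --- yields $\beta^*(c) \le \beta^*(c_n) + \beta^*(c - c_n) \le \ell + \beta^*(c - c_n)$. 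The problem thus reduces to showing $\beta^*(d_n) \to 0$ for $d_n := c - c_n$, which is nonnegative, decreases pointwise to $0$, lies in $M$, and is uniformly dominated by $D := (f_2 - f_1) + (g_2 - g_1) \in L$.

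For this reduced statement, I would construct, for each $\epsilon > 0$ and sufficiently large $n$, an approximating pair $f + g \in L$ with $f + g \ge d_n$ and small cost by iterating the subadditive decomposition. Writing $d_n = \sum_{k \ge 0}(d_{n+k} - d_{n+k+1})$ as a telescoping pointwise series of nonnegative terms in $M$, I would choose approximate minimizers for each difference with geometrically decaying slack $\epsilon\, 2^{-k-1}$ and attempt to sum them. The main obstacle is precisely this summation step: $L$ is not closed under arbitrary countable pointwise sums, and the $f$- and $g$-components of the individual approximations need not have summable $L_1$-norms, so the pointwise series need not converge to an element of $L$. Overcoming this --- in the spirit of \cite[Lemma 1]{HS} --- will require exploiting the uniform domination by $D$ to normalize each approximating pair (by shifting the two components by suitable constants and possibly truncating at $D$) so that the component series converge in $L_1(\mu)$ and $L_1(\nu)$ respectively; dominated convergence then yields a genuine $f + g \in L$ majorizing $d_n$ with arbitrarily small cost, completing the proof.
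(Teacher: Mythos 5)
The easy direction and the subadditivity reduction are fine, but the step you defer to the end---showing $\beta^*(d_n)\rightarrow 0$ for $d_n=c-c_n\downarrow 0$---is the entire content of the lemma, and the telescoping device you propose for it cannot work. Summing approximate minimizers for the increments $e_k=d_{n+k}-d_{n+k+1}$ yields a competitor whose cost is essentially $\sum_k\beta^*(e_k)$, and subadditivity only tells you $\beta^*(d_n)\le\sum_k\beta^*(e_k)$, an inequality in the useless direction: each increment pays its own full ``covering cost'' and these costs do not telescope. Concretely, take $\mathcal{X}=\mathcal{Y}=[0,1)$ with $\mu=\nu$ Lebesgue measure, let $H_k$ be the (pairwise disjoint) graphs of distinct rotations $x\mapsto x+r_k \pmod 1$, and set $e_k=k^{-1}1_{H_k}$, $c=\sum_{k\ge 1}e_k$, $c_n=\sum_{k<n}e_k$. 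Then $d_n=c-c_n\le 1/n$, so $\beta^*(d_n)\le 1/n\rightarrow 0$ as the lemma demands, yet $\beta^*(e_k)=k^{-1}\beta^*(1_{H_k})=k^{-1}$ and your bound $\sum_{k\ge n}\beta^*(e_k)$ is $+\infty$ for every $n$. The obstacle is therefore not, as you suggest, $L_1$-convergence of the component series (shifting components by constants preserves each pair's total cost, so no normalization helps); the approach itself is blocked.

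The missing idea is a compactness argument producing a \emph{single} limiting competitor for $c$, rather than a series of competitors for the increments. The paper first reduces, via the sandwich \eqref{new7} and a truncation $b\wedge 2k$ (with an elementary inequality controlling the tails $f1_{\{f>k\}}$ and $g1_{\{g>k\}}$), to the case $0\le c_n\le c\le k$. There, by Kellerer's Lemma 1.8 one may choose $f_n+g_n\ge c_n$ with $0\le f_n,g_n\le k$ and $\mu(f_n)+\nu(g_n)<\beta^*(c_n)+1/n$; uniform boundedness gives weakly convergent subsequences in $L_1(\mu)$ and $L_1(\nu)$, Mazur's lemma upgrades these to almost surely convergent convex combinations, and monotonicity of $(c_n)$ ensures the limit pair satisfies $f+g\ge c$ with $\mu(f)+\nu(g)=\lim_n\beta^*(c_n)$. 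If you want to salvage your outline, it is this weak-compactness step (or some substitute for it) that you must supply in place of the telescoping sum.
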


\begin{proof}
We first suppose $0\le c_n\le c\le k$ for some integer $k$. Under this assumption, for each $n$, there is $f_n+ g_n\in L$ such that
\begin{gather*}
f_n+ g_n\ge c_n,\quad 0\le f_n,\,g_n\le k,\quad \mu(f_n)+\nu(g_n)<\beta^*(c_n)+1/n;
\end{gather*}
see e.g. \cite[Lemma 1.8]{KEL}.

Since the sequences $(f_n)$ and $(g_n)$ are uniformly bounded, there are $f\in L_1(\mu)$, $g\in L_1(\nu)$ and a subsequence $(m_n)$ such that
\begin{gather*}
f_{m_n}\rightarrow f\text{ weakly in }L_1(\mu)\quad\text{and}\quad g_{m_n}\rightarrow g\text{ weakly in }L_1(\nu).
\end{gather*}
In turn, this implies the existence of a sequence $(\phi_n,\psi_n)$ such that $\phi_n\rightarrow f$ in $L_1(\mu)$, $\psi_n\rightarrow g$ in $L_1(\nu)$ and $(\phi_n,\psi_n)$ is a convex combination of $\{(f_{m_j},g_{m_j}):j\ge n\}$ for each $n$. By taking a further subsequence, it can be also assumed $\mu(\phi_n\rightarrow f)=\nu(\psi_n\rightarrow g)=1$. Since $(c_n)$ is increasing, $\phi_n+\psi_n\ge c_{m_n}$. Hence, after modifying $f$ and $g$ on null sets, one obtains $f+ g\ge c$. On noting that $(\beta^*(c_n))$ is a monotone sequence, it follows that
\begin{gather*}
\mu(f)+\nu(g)\ge\beta^*(c)\ge\lim_n\beta^*(c_n)=\lim_n\beta^*(c_{m_n})
\\=\lim_n\bigl\{\mu(f_{m_n})+\nu(g_{m_n})\bigr\}=\mu(f)+\nu(g).
\end{gather*}

This concludes the proof if $0\le c_n\le c\le k$. To deal with the general case, fix $p+q\in L$ such that $p+q\le c_1$ and define $b_n=c_n-(p+q)$ and $b=c-(p+q)$. Then, $0\le b_n\le b$. Further, since $\beta^*(h+p+q)=\beta^*(h)+\mu(p)+\nu(q)$ for each $h\in M$, it suffices to show that $\beta^*(b)=\lim_n\beta^*(b_n)$.

Given $k$, take $f_k+ g_k\in L$ such that
\begin{gather*}
f_k+ g_k\ge b\wedge 2k\quad\text{and}\quad\mu(f_k)+\nu(g_k)<\beta^*\bigl(b\wedge 2k)+1/k.
\end{gather*}
Take also $f+ g\in L$ such that $f+ g\ge b$ and note that
\begin{gather*}
f\,1_{\{g>k\}}=f\,1_{\{f\le k,g>k\}}+f\,1_{\{f>k,g>k\}}\le g\,1_{\{g>k\}}+f\,1_{\{f>k\}}.
\end{gather*}
Similarly, $g\,1_{\{f>k\}}\le g\,1_{\{g>k\}}+f\,1_{\{f>k\}}$. Hence,
\begin{gather*}
b\le b\,1_{\{b\le 2k\}}+(f+ g)\,1_{\{f+ g>2k\}}
\\\le f_k+ g_k+(f+ g)\,\bigl(1_{\{f>k\}}+1_{\{g>k\}}\bigr)
\\\le f_k+ g_k+3f\,1_{\{f>k\}}+3g\,1_{\{g>k\}}.
\end{gather*}
Since $f_k+ g_k+3f\,1_{\{f>k\}}+3g\,1_{\{g>k\}}$ belongs to $L$, it follows that
\begin{gather*}
\beta^*(b)\le\mu(f_k)+\nu(g_k)+3\mu\bigl[f\,1_{\{f>k\}}\bigr]+3\nu\bigl[g\,1_{\{g>k\}}\bigr]
\\<\beta^*(b\wedge 2k)+(1/k)+3\mu\bigl[f\,1_{\{f>k\}}\bigr]+3\nu\bigl[g\,1_{\{g>k\}}\bigr].
\end{gather*}
Fix $\epsilon>0$ and take $k$ such that $(1/k)+3\mu\bigl[f\,1_{\{f>k\}}\bigr]+3\nu\bigl[g\,1_{\{g>k\}}\bigr]<\epsilon$. By what already proved, $\beta^*(b\wedge 2k)=\lim_n\beta^*(b_n\wedge 2k)$. Therefore,
\begin{gather*}
\beta^*(b)<\beta^*(b\wedge 2k)+\epsilon=\lim_n\beta^*(b_n\wedge 2k)+\epsilon\le\lim_n\beta^*(b_n)+\epsilon.
\end{gather*}
This concludes the proof.
\end{proof}

In the second lemma, and in the rest of the paper, we write $\alpha(H)=\alpha(1_H)$ whenever $H\in\mathcal{H}$. The same notation is adopted for $\beta$, $\alpha^*$ and $\beta^*$.

\begin{lemma}\label{gh7t}
Let $c\in M$. Then, condition \eqref{goal} holds provided $\alpha(H)=\beta(H)$ for each $H\in\mathcal{H}$.
\end{lemma}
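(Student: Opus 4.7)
The plan is to pass from the indicator duality to duality for arbitrary $c\in M$ by successive approximations, with Lemma \ref{chi55} providing the continuity at limits.

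First, the hypothesis is self-dual: from $\alpha^*(H)=1-\alpha(H^c)$ and $\beta^*(H)=1-\beta(H^c)$ one obtains $\alpha^*(H)=\beta^*(H)$ for every $H\in\mathcal{H}$ as well. Combined with the symmetries $\alpha^*(c)=-\alpha(-c)$ and $\beta^*(c)=-\beta(-c)$ recorded in Section \ref{prel}, it therefore suffices to prove $\alpha(c)=\beta(c)$ for every $c\in M$; the dual identity then follows by applying this result to $-c$.

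Next, I reduce to simple $c$ in two further steps. Subtracting $f_1+g_1$ shifts both $\alpha$ and $\beta$ by the same constant, so we may assume $0\le c\le f_2+g_2$. For the truncation $c_n=c\wedge n\in M$, the tail $c-c_n$ is dominated by $(f_2+g_2)\,1_{\{f_2+g_2>n\}}$; splitting the contributions of $f_2$ and $g_2$ (as at the end of the proof of Lemma \ref{chi55}) and using that each has an $L_1$-tail yields $\sup_{P\in\Gamma(\mu,\nu)}P(c-c_n)\to 0$. Hence $\alpha(c_n)\to\alpha(c)$ and $\beta(c_n)\to\beta(c)$, reducing the problem to bounded $c$. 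For bounded $c$, the uniform approximation $c^{(m)}=m^{-1}\lfloor mc\rfloor$ satisfies $\|c-c^{(m)}\|_\infty\le 1/m$, and since both $\alpha$ and $\beta$ are $1$-Lipschitz in sup-norm, it suffices to prove duality for simple functions.

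The main obstacle is this final step. Given simple $c'=\sum_{i=1}^n a_i 1_{H_i}$ with the $H_i$ disjoint, one rewrites $c'=t_0+\sum_{j=1}^K\delta_j 1_{A_j}$ with $A_j=\{c'\ge t_j\}$ nested and $\delta_j\ge 0$; the hypothesis supplies $\alpha(A_j)=\beta(A_j)$, but since $\alpha$ and $\beta$ are only superadditive (not linear) in $c$, the indicator dualities do not combine automatically. My preferred tactic is to pass through the pushforward $\pi:\mathcal{X}\times\mathcal{Y}\to\{1,\dots,n\}$ defined by $\pi=i$ on $H_i$: the image set $\Pi=\{P\circ\pi^{-1}:P\in\Gamma(\mu,\nu)\}$ is a convex subset of the finite-dimensional probability simplex on $\{1,\dots,n\}$, whose description involves only the quantities $\alpha(H_S)$ and $\alpha^*(H_S)$ for $S\subseteq\{1,\dots,n\}$, all of which the hypothesis identifies with their $\beta$-counterparts. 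A finite-dimensional LP-duality argument on $\Pi$ should then match the primal $\inf_{Q\in\Pi}\sum_i a_i Q(\{i\})=\alpha(c')$ with the dual $\sup\{\mu(f)+\nu(g):f+g\le c'\}=\beta(c')$, completing the proof.
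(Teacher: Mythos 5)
Your reductions are sound as far as they go: the self-duality of the hypothesis under complementation, the shift by $f_1+g_1$, the truncation $c\wedge n$ controlled by the $L_1$-tails of $f_2$ and $g_2$, and the sup-norm Lipschitz property of $\alpha$ and $\beta$ all work, and they correctly reduce the lemma to duality for simple functions. But the final step is not a reduction of the difficulty --- it \emph{is} the difficulty --- and your proposed resolution does not go through. The convex set $\Pi=\{P\circ\pi^{-1}:P\in\Gamma(\mu,\nu)\}$ is \emph{not} described by the finitely many numbers $\alpha(H_S)$, $\alpha^*(H_S)$: its support function at a general vector $\lambda$ is $\inf_P\sum_i\lambda_iP(H_i)=\alpha\bigl(\sum_i\lambda_i1_{H_i}\bigr)$, which is exactly the quantity you are trying to compute, and knowing the support function only at $0$--$1$ vectors does not determine a convex body. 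Worse, even granting a description of $\Pi$, the finite-dimensional LP dual over $\{1,\dots,n\}$ has no evident relation to $\beta(c')=\sup\{\mu(f)+\nu(g):f+g\le c'\}$, whose competitors are functions of $x$ and $y$ \emph{separately} --- precisely the product structure that the quotient map $\pi$ destroys. Since $\alpha$ and $\beta$ are only superadditive, as you note, the indicator dualities for the level sets cannot be summed, and your proof stops at the point where the real work begins.

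The paper circumvents the combination problem entirely rather than solving it. It first notes that $\beta(c)$ is attained, say $\beta(c)=\mu(f_1)+\nu(g_1)$ with $f_1+g_1\le c$, and that there is a \emph{finitely additive} $Q$ on $\mathcal{H}$ with marginals $\mu,\nu$ and $Q(c)=\beta(c)$; hence $h=c-(f_1+g_1)\ge0$ satisfies $Q(h)=0$. Since $\beta(H)\le Q(H)$ for every $H\in\mathcal{H}$, Markov's inequality gives $\beta(h>t^{-1})\le Q(h>t^{-1})\le t\,Q(h)=0$, and the hypothesis --- applied to the \emph{single} set $\{h>t^{-1}\}$ rather than to a family of level sets --- produces $P_t\in\Gamma(\mu,\nu)$ with $P_t(h>t^{-1})<t^{-2}$. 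A uniform-integrability estimate (the same $1_{\{f+g>2t\}}\le1_{\{f>t\}}+1_{\{g>t\}}$ splitting you invoke for the truncation step) then converts smallness of $P_t(h>t^{-1})$ into smallness of $P_t(h)$, giving $\alpha(c)\le\beta(c)+o(1)$ as $t\to\infty$. If you want to complete your own argument you would need a genuinely new idea for the simple-function step; alternatively, adopt the finitely additive optimizer as the pivot, which is what makes the lemma true with only the set-level hypothesis.
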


\begin{proof}

It suffices to show $\alpha(c)=\beta(c)$. To this end, we first note that $\beta(c)$ is attained, i.e., $\beta(c)=\mu(f_1)+\nu(g_1)$ for some $f_1+ g_1\in L$ such that $f_1+ g_1\le c$; see \cite[Proposition 3]{RR1995}. Define $h=c-(f_1+g_1)$ and fix $t>1$ and $P\in\Gamma(\mu,\nu)$. Then,
\begin{gather*}
P(h)=P\bigl[h\,1_{\{h\le t^{-1}\}}\bigr]+P\bigl[h\,1_{\{t^{-1}<h\le 2t\}}\bigr]+P\bigl[h\,1_{\{h>2t\}}\bigr]
\\\le t^{-1}+2t\,P(h>t^{-1})+P\bigl[h\,1_{\{h>2t\}}\bigr].
\end{gather*}
Take $f_2+ g_2\in L$ such that $f_2+ g_2\ge c$ and define
\begin{gather*}
f=f_2-f_1\quad\text{and}\quad g=g_2-g_1.
\end{gather*}
Since $h\le f+ g$,
\begin{gather*}
P\bigl[h\,1_{\{h>2t\}}\bigr]\le P\bigl[(f+g)\,1_{\{f+ g>2t\}}\bigr]\le P\bigl[(f+g)\,1_{\{f>t\}}\bigr]+P\bigl[(f+g)\,1_{\{g>t\}}\bigr]
\\=\mu\bigl[f\,1_{\{f>t\}}\bigr]+\nu\bigl[g\,1_{\{g>t\}}\bigr]+P\bigl[f\,1_{\{g>t\}}+g\,1_{\{f>t\}}\bigr].
\end{gather*}
Arguing as in the proof of Lemma \ref{chi55},
\begin{gather*}
P\bigl[f\,1_{\{g>t\}}+g\,1_{\{f>t\}}\bigr]\le 2\,P\bigl[f\,1_{\{f>t\}}+g\,1_{\{g>t\}}]=2\mu\bigl[f\,1_{\{f>t\}}\bigr]+2\nu\bigl[g\,1_{\{g>t\}}\bigr].
\end{gather*}
Hence,
\begin{gather*}
P(h)\le t^{-1}+2t\,P(h>t^{-1})+3\,\bigl\{\mu\bigl[f\,1_{\{f>t\}}\bigr]+\nu\bigl[g\,1_{\{g>t\}}\bigr]\bigr\}.
\end{gather*}

Next, by Theorem 2.1.1 and Remark 2.1.2(b) of \cite{LIBRO}, there is a {\em finitely additive} probability $Q$ on $\mathcal{H}$, with marginals $\mu$ and $\nu$, such that $Q(c)=\beta(c)$. Since $Q$ has marginals $\mu$ and $\nu$, then $\beta(H)\le Q(H)$ for all $H\in\mathcal{H}$ and
\begin{gather*}
Q(h)=Q(c)-Q(f_1+g_1)=\beta(c)-\mu(f_1)-\nu(g_1)=0.
\end{gather*}

Finally, since $h\ge 0$ and $\alpha(H)=\beta(H)$ for all $H\in\mathcal{H}$, one obtains
\begin{gather*}
\alpha(h>t^{-1})=\beta(h>t^{-1})\le Q(h>t^{-1})\le t\,Q(h)=0.
\end{gather*}
Hence, there is $P_t\in\Gamma(\mu,\nu)$ such that $P_t(h>t^{-1})<t^{-2}$. It follows that
\begin{gather*}
\alpha(c)\le P_t(c)=P_t(f_1+ g_1)+P_t(h)=\mu(f_1)+\nu(g_1)+P_t(h)
\\\le\beta(c)+3\,\bigl\{1/t+\mu\bigl[f\,1_{\{f>t\}}\bigr]+\nu\bigl[g\,1_{\{g>t\}}\bigr]\bigr\}\quad\text{for all }t>1.
\end{gather*}
Since $1/t+\mu\bigl[f\,1_{\{f>t\}}\bigr]+\nu\bigl[g\,1_{\{g>t\}}\bigr]\rightarrow 0$ as $t\rightarrow\infty$, this concludes the proof.

\end{proof}

\section{Duality theorems without perfectness}

It is convenient to distinguish two cases.

\subsection{The abstract case}

\begin{theorem}\label{sw4rf}
Let $c\in M$. Then, condition \eqref{goal} holds provided

\vspace{0.2cm}

\begin{itemize}

\item[(*)] For each $\epsilon>0$, there is a countable partition $\{A_0,A_1,\ldots\}\subset\mathcal{F}$ of $\mathcal{X}$ such that $\mu(A_0)=0$ and

\begin{gather*}
\sup_{y\in\mathcal{Y}}\,\abs{c(x,y)-c(z,y)}\le\epsilon\quad\text{whenever }x,\,z\in A_i\text{ and }i>0.
\end{gather*}

\end{itemize}

\vspace{0.2cm}

\end{theorem}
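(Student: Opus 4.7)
By the $c \leftrightarrow -c$ symmetry of both \eqref{new7} and (*) (the same partition works for $-c$), it would suffice to establish $\alpha(c) = \beta(c)$. Fix $\epsilon > 0$, use (*) to obtain the partition $\{A_0, A_1, \ldots\}$, pick $x_i \in A_i$ for each $i \ge 1$, and introduce the $x$-piecewise-constant proxy
\[
\tilde c(x,y) = \begin{cases} c(x_i,y) & \text{if } x \in A_i,\ i \ge 1,\\ c(x,y) & \text{if } x \in A_0.\end{cases}
\]
Then $\tilde c \in M$ (possibly after shifting $g_1, g_2$ by $\pm\epsilon$) and $|\tilde c - c| \le \epsilon$ outside the $\mu$-null set $A_0 \times \mathcal{Y}$. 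Since $P(A_0 \times \mathcal{Y}) = 0$ for every $P \in \Gamma(\mu,\nu)$, the pointwise bound gives $|P(c) - P(\tilde c)| \le \epsilon$, hence $|\alpha(c) - \alpha(\tilde c)| \le \epsilon$; and since $f + g \le c$ forces $f + g \le \tilde c + \epsilon$ pointwise (and conversely), $|\beta(c) - \beta(\tilde c)| \le \epsilon$. Everything thus reduces to proving $\alpha(\tilde c) = \beta(\tilde c)$, after which $\epsilon \to 0$ closes the theorem.

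To handle $\tilde c$, I would discretize the $\mathcal{X}$-side. Let $\pi : \mathcal{X} \to \mathbb{N}$ be $\pi(x) = i$ iff $x \in A_i$, set $\tilde\mu := \pi_* \mu$ (a discrete, hence perfect probability on $2^{\mathbb{N}}$), and define $\hat c(i,y) := c(x_i,y)$ for $i \ge 1$. The pushforward $P \mapsto (\pi \times \mathrm{id})_* P$ sends $\Gamma(\mu,\nu)$ into $\Gamma(\tilde\mu,\nu)$ preserving $\int \hat c$; conversely, any $Q \in \Gamma(\tilde\mu,\nu)$ disintegrates as $Q(\{i\} \times \cdot) = \mu(A_i)\nu_i$ and lifts to $P := \sum_{i \ge 1} \mu(\cdot \cap A_i) \otimes \nu_i \in \Gamma(\mu,\nu)$ with $P(\tilde c) = Q(\hat c)$, so $\alpha(\tilde c) = \tilde\alpha(\hat c) := \inf_{Q \in \Gamma(\tilde\mu,\nu)} Q(\hat c)$. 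A parallel argument gives $\beta(\tilde c) = \tilde\beta(\hat c)$, the key step being that for any admissible pair $(f,g)$ for $\tilde c$ the function $\tilde f(i) := \mathrm{ess\,sup}_{A_i} f$ satisfies $\tilde f + g \le \hat c$ pointwise and $\tilde\mu(\tilde f) \ge \mu(f)$. Ramachandran--Ruschendorf \cite{RR1995}, applicable since $\tilde\mu$ is perfect, then yields $\tilde\alpha(\hat c) = \tilde\beta(\hat c)$, whence $\alpha(\tilde c) = \beta(\tilde c)$.

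The hard part will be the integrability bookkeeping inside this reduction: to invoke \cite{RR1995} one needs $\hat c$ to satisfy \eqref{new7} on $(\mathbb{N}, \tilde\mu) \times (\mathcal{Y}, \nu)$ with bounds in $L_1(\tilde\mu)$ and $L_1(\nu)$, and to identify $\beta(\tilde c)$ with $\tilde\beta(\hat c)$ one must check that $\tilde f \in L_1(\tilde\mu)$. Both reduce to bounding sums of the form $\sum_i c(x_i, y_0)\mu(A_i)$ for a well-chosen $y_0$; here condition (*) is what saves the day, since $|c(x_i, y_0) - c(x, y_0)| \le \epsilon$ for $x \in A_i$ yields
\[
\sum_{i \ge 1} c(x_i, y_0)\,\mu(A_i) \le \int c(x, y_0)\,\mu(dx) + \epsilon \le \mu(f_2) + g_2(y_0) + \epsilon,
\]
which is finite for $\nu$-almost every $y_0$ by \eqref{new7}. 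After a WLOG reduction to $f_1 \le 0 \le f_2$, analogous estimates show that the natural choices $\tilde f_j(i) := \mathrm{ess\,inf}_{A_i} f_j \pm \epsilon$ lie in $L_1(\tilde\mu)$ and provide the bounds needed to place $\hat c$ in the class $M$ associated to the reduced problem.
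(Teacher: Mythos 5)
Your proposal is correct and follows essentially the same route as the paper's proof: replace $c$ by the $x$-piecewise-constant approximation, observe that the induced first marginal (the paper's $\mu_0$ on the atomic $\sigma$-field $\mathcal{F}_0$, your $\tilde\mu$ on $\mathbb{N}$) is atomic hence perfect so that Ramachandran--Ruschendorf applies, and lift the near-optimal coupling back to $\Gamma(\mu,\nu)$ via exactly the same construction $P=\sum_i\mu(A_i)\,(\mu_i\times\nu_i)$. The only cosmetic difference is that the paper phrases the discretization as a restriction to a sub-$\sigma$-field rather than a pushforward to $\mathbb{N}$, and bounds only the one-sided error it needs.
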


\begin{proof} Again, it suffices to show $\alpha(c)=\beta(c)$. Given $\epsilon>0$, fix a point $x_i\in A_i$ for each $i>0$, and define
\begin{gather*}
\mathcal{F}_0=\sigma\bigl(A_0\cap A,\,A_i:A\in\mathcal{F},\,i>0\bigr),\quad\mu_0=\mu|\mathcal{F}_0,
\\c_0(x,y)=1_{A_0}(x)c(x,y)+\sum_{i>0}1_{A_i}(x)c(x_i,y).
\end{gather*}
Let $\Gamma(\mu_0,\nu)$ be the set of probability measures on $\mathcal{F}_0\otimes\mathcal{G}$ with marginals $\mu_0$ and $\nu$.

Take $f_1+ g_1\in L$ and $f_2+ g_2\in L$ such that $f_1+ g_1\le c\le f_2+ g_2$. Since $\abs{c-c_0}\le\epsilon$, then $f_1+ g_1-\epsilon\le c_0\le f_2+ g_2+\epsilon$. Further, $\sup_{A_i}f_1<+\infty$ and $\inf_{A_i}f_2>-\infty$ for each $i>0$. Define
\begin{gather*}
\phi_1=-\epsilon+1_{A_0}f_1+\sum_{i>0}1_{A_i}\,\Bigl(\sup_{A_i}f_1\Bigr)\quad\text{and}\quad \phi_2=\epsilon+1_{A_0}f_2+\sum_{i>0}1_{A_i}\,\Bigl(\inf_{A_i}f_2\Bigr).
\end{gather*}
Then, $\phi_1,\,\phi_2\in L_1(\mu_0)$ and
\begin{gather*}
\phi_1+ g_1\le c_0\le \phi_2+ g_2.
\end{gather*}
Because of such inequality and since $c_0$ is $\mathcal{F}_0\otimes\mathcal{G}$-measurable, one can define
\begin{gather*}
\alpha_0=\inf_{T\in\Gamma(\mu_0,\nu)}T(c_0)\quad\text{and}\quad\beta_0=\sup_{(f,g)}\bigl[\mu_0(f)+\nu(g)]
\end{gather*}
where $\sup$ is over the pairs $(f,g)$ such that $f\in L_1(\mu_0)$, $g\in L_1(\nu)$ and $f+ g\le c_0$.

Since $\mu_0$ is an atomic probability measure, then $\mu_0$ is perfect, which in turn implies $\alpha_0=\beta_0$. Since $\abs{c-c_0}\le\epsilon$, then $\beta_0\le\beta(c)+\epsilon$. Hence, there is $T\in\Gamma(\mu_0,\nu)$ such that
\begin{gather*}
T(c_0)<\alpha_0+\epsilon=\beta_0+\epsilon\le\beta(c)+2\epsilon.
\end{gather*}
If $T$ can be extended to a probability measure $P\in\Gamma(\mu,\nu)$, then
\begin{gather*}
\alpha(c)\le P(c)\le\epsilon+P(c_0)=\epsilon+T(c_0)<3\epsilon+\beta(c).
\end{gather*}
Thus, to conclude the proof, it suffices to show that $T$ can be actually extended to a probability measure $P\in\Gamma(\mu,\nu)$.

For each $i$ with $\mu(A_i)>0$, define
\begin{gather*}
\mu_i(A)=\mu(A\mid A_i)\quad\text{ and }\quad\nu_i(B)=T\bigl(\mathcal{X}\times B\mid A_i\times\mathcal{Y}\bigr)
\end{gather*}
where $A\in\mathcal{F}$ and $B\in\mathcal{G}$. Define also
\begin{gather*}
P=\sum_i\mu(A_i)\,(\mu_i\times\nu_i),
\end{gather*}
where $\mu_i\times\nu_i$ is the product measure of $\mu_i$ and $\nu_i$ (so that $\mu_i\times\nu_i$ is a probability measure on $\mathcal{H}$). It is straightforward to see that $P\in\Gamma(\mu,\nu)$. Fix $A\in\mathcal{F}_0$ and $B\in\mathcal{G}$. For $i>0$, either $A\cap A_i=\emptyset$ or $A\cap A_i=A_i$, so that
\begin{gather*}
P(A\times B)=\sum_i\mu(A_i)\mu_i(A)\nu_i(B)=\sum_i\mu(A\mid A_i)T(A_i\times B)=T(A\times B).
\end{gather*}
Therefore, $P=T$ on $\mathcal{F}_0\otimes\mathcal{G}$.

\end{proof}

\vspace{0.2cm}

In Theorem \ref{sw4rf}, clearly, the roles of $\mu$ and $\nu$ can be interchanged. Accordingly, condition (*) can be replaced by

\begin{itemize}

\item[(**)] For each $\epsilon>0$, there is a countable partition $\{B_0,B_1,\ldots\}\subset\mathcal{G}$ of $\mathcal{Y}$ such that $\nu(B_0)=0$ and

\begin{gather*}
\sup_{x\in\mathcal{X}}\,\abs{c(x,y)-c(x,z)}\le\epsilon\quad\text{whenever }y,\,z\in B_i\text{ and }i>0.
\end{gather*}

\end{itemize}

As an example, condition (*) holds (with $A_0=\emptyset$) if $\mathcal{X}$ is a separable metric space and the function $x\mapsto c(x,y)$ is Lipschitz uniformly with respect to $y$, i.e.,
\begin{gather}\label{p998b}
\sup_{y\in\mathcal{Y}}\,\abs{c(x,y)-c(z,y)}\le u\,d(x,z)\quad\text{for all }x,\,z\in\mathcal{X}
\end{gather}
where $u>0$ is a constant and $d$ the distance on $\mathcal{X}$. Fix in fact $\epsilon>0$. Because of separability, $\mathcal{X}$ can be partitioned into sets $A_1,A_2,\ldots$ whose diameter is less than $\epsilon/u$. Hence, condition (*) follows trivially from \eqref{p998b}. Similarly, condition (**) holds if $\mathcal{Y}$ is a separable metric space and $y\mapsto c(x,y)$ is Lipschitz uniformly with respect to $x$. Further, as shown in the proof of Theorem \ref{v6y}, separability of $\mathcal{X}$ (of $\mathcal{Y}$) can be weakened into separability of $\mu$ (of $\nu$).

Another example is the following. Let $\mathcal{R}$ be the field of subsets of $\mathcal{X}\times\mathcal{Y}$ generated by the measurable rectangles. Each $R\in\mathcal{R}$ can be written as $R=\cup_{i=1}^n(A_i\times B_i)$ for some $n\ge 1$ and $A_i\in\mathcal{F}$, $B_i\in\mathcal{G}$ such that $A_i\cap A_j=\emptyset$ for $i\neq j$. Thus, when $c=1_R$ with $R\in\mathcal{R}$, condition (*) is trivially true and Theorem \ref{sw4rf} yields $\alpha(R)=\beta(R)$ and $\alpha^*(R)=\beta^*(R)$. We next prove duality of certain sets related to $\mathcal{R}$.

\begin{theorem}\label{z19o}
Let $H=\cup_nR_n$ and $K=\cap_nR_n$ where $R_n\in\mathcal{R}$ for each $n$. Then,
\begin{gather*}
\alpha^*(H)=\beta^*(H)\quad\text{and}\quad\alpha(K)=\beta(K).
\end{gather*}
In addition, $\alpha(H)=\beta(H)$ provided $H$ can be written as $H=\cup_n (A_n\times B_n)$ with $A_n\in\mathcal{F}$, $B_n\in\mathcal{G}$, and
\begin{gather*}
\mu(\limsup_nA_n)=0\quad\text{or}\quad\nu(\limsup_nB_n)=0.
\end{gather*}
(Here, $\limsup_nA_n=\cap_n\cup_{j>n}A_j$ and $\limsup_nB_n=\cap_n\cup_{j>n}B_j$).
\end{theorem}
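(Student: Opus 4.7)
The plan is to handle each of the three assertions by approximating with sets from the field $\mathcal{R}$, for which duality is already known (as recorded just before the theorem, since $c = 1_R$ with $R \in \mathcal{R}$ trivially satisfies condition (*)), and then passing to the limit either via Lemma \ref{chi55} or via a direct tail estimate.

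For the first assertion $\alpha^*(H) = \beta^*(H)$, I would first note that, since $\mathcal{R}$ is a field, one may replace $R_n$ by $\bigcup_{j \leq n} R_j$ and thus assume $R_n \uparrow H$. Countable additivity gives $P(R_n) \uparrow P(H)$ for every $P \in \Gamma(\mu,\nu)$, whence $\alpha^*(H) = \lim_n \alpha^*(R_n)$; Lemma \ref{chi55} applied to $c_n = 1_{R_n}$ yields $\beta^*(H) = \lim_n \beta^*(R_n)$; and $\alpha^*(R_n) = \beta^*(R_n)$ is known. For the second assertion $\alpha(K) = \beta(K)$, I would pass to complements: $K^c = \bigcup_n R_n^c$ with $R_n^c \in \mathcal{R}$, so the first assertion applies to $K^c$. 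The identities $\alpha(K) = 1 - \alpha^*(K^c)$ and $\beta(K) = 1 - \beta^*(K^c)$ (the latter obtained by the substitution $F = 1 - f$, $G = -g$ in the infimum defining $\beta^*(K^c)$) then conclude.

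For the supplementary statement on $\alpha(H) = \beta(H)$, assume without loss of generality that $\mu(\limsup_n A_n) = 0$ (the other case is symmetric), and set $R_n = \bigcup_{j \leq n}(A_j \times B_j) \in \mathcal{R}$ and $C_n = \bigcup_{j > n} A_j$. The key observation is that
\[
H \setminus R_n \subseteq \bigcup_{j > n}(A_j \times B_j) \subseteq C_n \times \mathcal{Y},
\]
so $P(H) \leq P(R_n) + \mu(C_n)$ for every $P \in \Gamma(\mu,\nu)$. Taking the infimum over $P$ and invoking $\alpha(R_n) = \beta(R_n) \leq \beta(H)$, this yields $\alpha(H) \leq \beta(H) + \mu(C_n)$. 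Since $C_n \downarrow \limsup_n A_n$, which has $\mu$-measure zero, letting $n \to \infty$ gives $\alpha(H) \leq \beta(H)$; the opposite inequality is automatic.

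The main conceptual obstacle is that Lemma \ref{chi55} supplies continuity of $\beta^*$ only along increasing sequences, equivalently continuity of $\beta$ only along decreasing sequences. In particular, $\beta(H) = \lim_n \beta(R_n)$ for $R_n \uparrow H$ is not available, which is precisely why the third assertion on $\alpha(H) = \beta(H)$ cannot be proved by a direct limit argument and instead needs the tail-control hypothesis $\mu(\limsup_n A_n) = 0$ (or the symmetric one on $B_n$) to dominate the discrepancy $H \setminus R_n$ by a product set of arbitrarily small measure.
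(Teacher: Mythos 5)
Your proposal is correct and follows essentially the same route as the paper: duality on the field $\mathcal{R}$ via Theorem \ref{sw4rf}, the sup-interchange plus Lemma \ref{chi55} for $\alpha^*(H)=\beta^*(H)$, complementation for $K$, and the tail estimate $H\setminus R_n\subseteq C_n\times\mathcal{Y}$ with $\mu(C_n)\downarrow 0$ for the last claim (the paper phrases this by choosing a near-optimal $P$ rather than taking infima directly, a purely cosmetic difference). Your closing remark on why $\beta$ lacks continuity along increasing sequences correctly identifies the obstruction the extra hypothesis is designed to circumvent.
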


\begin{proof}
Let $H_n=\cup_{i=1}^nR_i$. Since $\alpha^*(H_n)=\beta^*(H_n)$, Lemma \ref{chi55} implies
\begin{gather*}
\alpha^*(H)=\sup_{P\in\Gamma(\mu,\nu)}P(H)=\sup_{P\in\Gamma(\mu,\nu)}\sup_nP(H_n)=\sup_n\sup_{P\in\Gamma(\mu,\nu)}P(H_n)
\\=\sup_n\alpha^*(H_n)=\sup_n\beta^*(H_n)=\beta^*(H).
\end{gather*}
Thus, $\alpha^*$ and $\beta^*$ agree on countable unions of elements of $\mathcal{R}$. Since $R_n^c\in\mathcal{R}$, this implies
\begin{gather*}
\alpha(K)=1-\alpha^*(K^c)=1-\alpha^*\bigl(\cup_nR_n^c\bigr)
\\=1-\beta^*\bigl(\cup_nR_n^c\bigr)=1-\beta^*(K^c)=\beta(K).
\end{gather*}
Next, suppose $H=\cup_n (A_n\times B_n)$ and $\mu(\limsup_nA_n)=0$. Let $V_n=\cup_{i=1}^n(A_i\times B_i)$. Given $\epsilon>0$, take $n\ge 1$ such that $\mu\bigl(\cup_{i>n}A_i\bigr)<\epsilon$, and then take $P\in\Gamma(\mu,\nu)$ satisfying $P(V_n)<\alpha(V_n)+\epsilon$. Since $\alpha(V_n)=\beta(V_n)$, one obtains
\begin{gather*}
\alpha(H)\le P(H)\le P(V_n)+P\bigl(\cup_{i>n}(A_i\times B_i)\bigr)\le P(V_n)+\mu\bigl(\cup_{i>n}A_i\bigr)
\\<\alpha(V_n)+2\epsilon=\beta(V_n)+2\epsilon\le\beta(H)+2\epsilon.
\end{gather*}
The proof is exactly the same if $\nu(\limsup_nB_n)=0$.
\end{proof}

Because of Theorem \ref{z19o}, a (classical) question raised by Arveson \cite{ARV} admits a positive answer for countable unions of measurable rectangles.

\vspace{0.2cm}

\noindent {\bf Arveson's problem:} {\em If $H\in\mathcal{H}$ satisfies $P(H)=0$ for all $P\in\Gamma(\mu,\nu)$, are there $A\in\mathcal{F}$ and $B\in\mathcal{G}$ such that $\mu(A)=\nu(B)=0$ and $H\subset (A\times\mathcal{Y})\cup (\mathcal{X}\times B)$ ?}

\vspace{0.2cm}

\noindent Indeed, it is not hard to see that $\beta^*(H)=\mu(A)+\nu(B)$ for some $A\in\mathcal{F}$ and $B\in\mathcal{G}$ with $H\subset (A\times\mathcal{Y})\cup (\mathcal{X}\times B)$; see e.g. \cite[Lemma 1]{HS}. If $H$ is a countable union of measurable rectangles, Theorem \ref{z19o} implies $\beta^*(H)=\alpha^*(H)=0$ so that $\mu(A)=\nu(B)=0$.

In addition, exploiting Theorem \ref{z19o}, duality for $H=\cup_n (A_n\times B_n)$ can be obtained under various conditions. One such condition is $\mu(\limsup_nA_n)=0$ or $\nu(\limsup_nB_n)=0$. A similar condition is that $H^c$ is also a countable union of measurable rectangles. In this case, in fact, $\alpha^*(H^c)=\beta^*(H^c)$ or equivalently $\alpha(H)=\beta(H)$. A last condition is

\begin{gather}\label{vnbg67r}
\text{for each }n\ge 1\text{ there is a measurable function }\phi_n:\mathcal{X}\rightarrow\mathcal{Y}\text{ such that}
\\\nu=\mu\circ\phi_n^{-1}\quad\text{and}\quad\mu\bigl\{x:(x,\phi_n(x))\in H\bigr\}< 1/n.\notag
\end{gather}

\noindent Define in fact $P_n(U)=\mu\bigl\{x:(x,\phi_n(x))\in U\bigr\}$ for each $n\ge 1$ and $U\in\mathcal{H}$. Then, $P_n\in\Gamma(\mu,\nu)$ and $\alpha(H)\le P_n(H)< 1/n$. Thus $\alpha(H)=0$, which in turn implies $\alpha(H)=\beta(H)$. Here is a simple example.

\begin{example}\label{v8j9}
Suppose $(\mathcal{X},\mathcal{F})=(\mathcal{Y},\mathcal{G})$ and $\mu=\nu$, with $\mathcal{X}$ a separable metric space and $\mathcal{F}=\mathcal{B}(\mathcal{X})$. (Up to some technicalities, separability of $\mathcal{X}$ could be weakened into separability of $\mu$). Let $\Delta=\{(x,x):x\in\mathcal{X}\}$ be the diagonal and $H$ a countable union of measurable rectangles. Then, {\em duality holds for $H\cap\Delta^c$, and it holds for $H\cap\Delta$ provided $\mu$ vanishes on singletons}. In fact, $H\cap\Delta^c$ is a countable union of measurable rectangles and $\mu\bigl\{x:(x,x)\in H\cap\Delta^c\bigr\}=0$. Letting $\phi_n(x)=x$, Theorem \ref{z19o} and condition \eqref{vnbg67r} yield
\begin{gather*}
\alpha(H\cap\Delta^c)=\beta(H\cap\Delta^c)\quad\text{and}\quad\alpha^*(H\cap\Delta^c)=\beta^*(H\cap\Delta^c).
\end{gather*}
To deal with $H\cap\Delta$, suppose $\mu$ null on singletons and define $P_1=\mu\times\mu$ and $P_2(U)=\mu\bigl\{x:(x,x)\in U\bigr\}$ for each $U\in\mathcal{H}$. Then, $P_1,\,P_2\in\Gamma(\mu,\mu)$. Since $\mu$ is null on singletons, $\alpha(H\cap\Delta)\le P_1(H\cap\Delta)\le P_1(\Delta)=0$, which in turn implies $\alpha(H\cap\Delta)=\beta(H\cap\Delta)$. Finally, writing $H$ as $H=\cup_n(A_n\times B_n)$, one obtains
\begin{gather*}
\alpha^*(H\cap\Delta)\le\beta^*(H\cap\Delta)\le\mu\bigl(\cup_n(A_n\cap B_n)\bigr)=P_2(H\cap\Delta)\le\alpha^*(H\cap\Delta).
\end{gather*}
\end{example}

\vspace{0.2cm}

We close this Subsection with two remarks. The first (stated as a lemma) suggests a possible strategy for proving a general duality theorem.

\begin{lemma}
Let $\mathcal{H}_0=\bigl\{H\in\mathcal{H}:\alpha(H)=\beta(H)$ and $\alpha^*(H)=\beta^*(H)\bigr\}$. Then, condition \eqref{goal} holds for each $c\in M$ if and only if
\begin{gather}\label{bbt67}
H_n\in\mathcal{H}_0\text{ and }H_n\subset H_{n+1}\text{ for each }n\quad\Longrightarrow\quad\alpha\bigl(\cup_nH_n\bigr)=\beta(\cup_nH_n\bigr).
\end{gather}
\end{lemma}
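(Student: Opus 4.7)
Necessity is immediate: if \eqref{goal} holds for every $c \in M$, then applied to $c = 1_{\cup_n H_n} \in M$ it yields $\alpha(\cup_n H_n) = \beta(\cup_n H_n)$. The content lies in the converse. Assuming \eqref{bbt67}, the plan is to show $\mathcal{H}_0 = \mathcal{H}$ via the monotone class theorem and then invoke Lemma \ref{gh7t} to conclude \eqref{goal} for every $c \in M$.

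To run the monotone class theorem, I would verify that $\mathcal{H}_0$ is a monotone class containing the algebra $\mathcal{R}$ of finite disjoint unions of measurable rectangles. The inclusion $\mathcal{R} \subset \mathcal{H}_0$ is exactly what Theorem \ref{sw4rf} yields for $c = 1_R$, as already recorded in the paragraph preceding Theorem \ref{z19o}. Closure under complementation is immediate from the identities $\alpha(H^c) = 1 - \alpha^*(H)$, $\beta(H^c) = 1 - \beta^*(H)$ and their swapped versions, which imply $H \in \mathcal{H}_0 \iff H^c \in \mathcal{H}_0$.

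The core step is closure under countable monotone increasing unions. Fix $H_n \in \mathcal{H}_0$ with $H_n \uparrow H$. The hypothesis \eqref{bbt67} delivers $\alpha(H) = \beta(H)$ directly, so I only need to verify $\alpha^*(H) = \beta^*(H)$. Applying Lemma \ref{chi55} to the sequence $1_{H_n} \uparrow 1_H$ in $M$ yields $\beta^*(H) = \lim_n \beta^*(H_n)$. On the other hand, for any $P \in \Gamma(\mu,\nu)$, countable additivity gives $P(H) = \lim_n P(H_n) \le \lim_n \alpha^*(H_n)$, so $\alpha^*(H) \le \lim_n \alpha^*(H_n)$, and the reverse inequality is monotonicity of $\alpha^*$. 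Since $\alpha^*(H_n) = \beta^*(H_n)$ by assumption, the two limits coincide, whence $\alpha^*(H) = \beta^*(H)$ and $H \in \mathcal{H}_0$. Closure under monotone decreasing intersections then follows by passing to complements.

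The monotone class theorem therefore gives $\mathcal{H}_0 \supset \sigma(\mathcal{R}) = \mathcal{H}$, hence $\mathcal{H}_0 = \mathcal{H}$, and Lemma \ref{gh7t} finishes the proof. I do not foresee any serious obstacle; the only nontrivial point is that the hypothesis \eqref{bbt67}, which concerns only the pair $(\alpha,\beta)$, nevertheless forces closure of $\mathcal{H}_0$ for the \emph{full} pair of equalities defining $\mathcal{H}_0$, with Lemma \ref{chi55} together with countable additivity of each $P \in \Gamma(\mu,\nu)$ performing the bootstrap.
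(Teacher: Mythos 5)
Your proof is correct and follows essentially the same route as the paper: reduce to showing $\mathcal{H}_0=\mathcal{H}$ via Lemma \ref{gh7t}, note $\mathcal{R}\subset\mathcal{H}_0$ and closure under complements, and handle increasing unions by combining hypothesis \eqref{bbt67} for the $(\alpha,\beta)$ half with the Lemma \ref{chi55} plus countable-additivity argument (the one used in Theorem \ref{z19o}) for the $(\alpha^*,\beta^*)$ half. The only difference is that you spell out the $\alpha^*=\beta^*$ step that the paper delegates to ``arguing as in the proof of Theorem \ref{z19o}.''
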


\begin{proof}
By Lemma \ref{gh7t}, it suffices to show that $\mathcal{H}_0=\mathcal{H}$. In turn, since $\mathcal{R}\subset\mathcal{H}_0$, it suffices to see that $\mathcal{H}_0$ is a monotone class. Also, since $\mathcal{H}_0$ is closed under complements, it is enough to prove that $H\in\mathcal{H}_0$ provided $H$ is the union of an increasing sequence of elements of $\mathcal{H}_0$. Let  $H=\cup_n H_n$ where $H_n\in\mathcal{H}_0$ and $H_n\subset H_{n+1}$ for each $n$. For such $H$, arguing as in the proof of Theorem \ref{z19o}, one obtains $\alpha^*(H)=\beta^*(H)$. Thus, under \eqref{bbt67}, $\mathcal{H}_0$ is actually a monotone class.
\end{proof}

The second remark briefly compares the arguments underlying Theorem \ref{sw4rf} and the usual duality theorems. The latter are summarized into the result by Ramachandran and Ruschendorf \cite{RR1995}.

For definiteness, we aim to prove $\alpha(c)=\beta(c)$. By \eqref{new7} and since $\beta(c)$ is attained, it can be assumed $c\ge 0$ and $\beta(c)=0$. As noted in the proof of Lemma \ref{gh7t}, there is a finitely additive probability $Q$ on $\mathcal{H}$, with marginals $\mu$ and $\nu$, satisfying $Q(c)=\beta(c)$. Since $c\ge 0$ and $\beta(c)=0$, it must be $Q(c>\epsilon)=0$ for each $\epsilon>0$. A basic intuition in \cite{RR1995} is that, if one of $\mu$ and $\nu$ is perfect, then $Q$ is $\sigma$-additive on $\mathcal{R}$; see \cite[Theorem 2.1.3]{LIBRO} and recall that $\mathcal{R}$ is the field generated by the measurable rectangles. Hence, there is $P\in\Gamma(\mu,\nu)$ such that $P=Q$ on $\mathcal{R}$. With such a $P$, one obtains
\begin{gather*}
P\bigl(\cup_i R_i\bigr)=\sup_n P\bigl(\cup_{i=1}^n R_i\bigr)=\sup_n Q\bigl(\cup_{i=1}^n R_i\bigr)\le Q\bigl(\cup_i R_i\bigr)
\end{gather*}
provided $R_i\in\mathcal{R}$ for all $i$. Hence, $P(c>\epsilon)\le Q(c>\epsilon)=0$ if the set $\{c>\epsilon\}$ is a countable union of measurable rectangles. Up to some technicalities, suitable versions of this argument work even if $\{c>\epsilon\}$ fails to be a countable union of measurable rectangles. This provides a rough sketch of the proof of $\alpha(c)=\beta(c)$ under the assumption that one of $\mu$ and $\nu$ is perfect. We now turn to Theorem \ref{sw4rf}. Here, instead of proving that $Q$ is $\sigma$-additive on $\mathcal{R}$, one requires that $c$ can be suitably approximated by $\mathcal{R}$-simple functions. For instance, conditions (*)-(**) are trivially true if $c$ is the uniform limit of a sequence of $\mathcal{R}$-simple functions, and in this case no assumptions on $\mu$ or $\nu$ are needed. Apparently, conditions (*)-(**) are too restrictive to be useful in real problems. Instead, they allow to get duality in various situations, including Theorem \ref{z19o}, Example \ref{v8j9}, and the results in the next subsection.

\subsection{The metric case}\label{a3q}

In this subsection, $\mathcal{X}$ and $\mathcal{Y}$ are metric spaces, $\mathcal{F}=\mathcal{B}(\mathcal{X})$ and $\mathcal{G}=\mathcal{B}(\mathcal{Y})$. The {\em sections of $c$} are the functions $x\mapsto c(x,y)$ and $y\mapsto c(x,y)$, with $y$ fixed in the first map and $x$ fixed in the second.

A remark is in order. All claims made so far are still valid, even if $c$ is not $\mathcal{H}$-measurable, provided $c\,1_{A\times B}$ is $\mathcal{H}$-measurable for some $A\in\mathcal{F}$ and $B\in\mathcal{G}$ with $\mu(A)=\nu(B)=1$. In fact, $\alpha(c)=\alpha(c\,1_{A\times B})$ whenever $\alpha(c)$ is defined in the obvious way, i.e.
\begin{gather*}
\alpha(c)=\inf_{P\in\Gamma(\mu,\nu)}\overline{P}(c)\quad\text{where }\overline{P}\text{ is the completion of }P.
\end{gather*}
Similarly, $\alpha^*(c)=\alpha^*(c\,1_{A\times B})$, $\beta(c)=\beta(c\,1_{A\times B})$ and $\beta^*(c)=\beta^*(c\,1_{A\times B})$.

In the next result, $c\,1_{A\times B}$ is actually $\mathcal{H}$-measurable for some $A\in\mathcal{F}$ and $B\in\mathcal{G}$ such that $\mu(A)=\nu(B)=1$ (with possibly $A=\mathcal{X}$ or $B=\mathcal{Y}$).

\begin{theorem}\label{v6y}
Suppose $c$ satisfies condition \eqref{new7}, the map $x\mapsto c(x,y)$ is continuous for each $y\in\mathcal{Y}$ and the map $y\mapsto c(x,y)$ is $\mathcal{G}$-measurable for each $x\in\mathcal{X}$. Then,

\begin{itemize}

\item[(i)] $\alpha^*(c)=\beta^*(c)$ if $c$ is bounded below and $\mu$ is separable;

\item[(ii)] $\alpha(c)=\beta(c)$ if $c$ is bounded above and $\mu$ is separable;

\item[(iii)] $\alpha(c)=\beta(c)$ and $\alpha^*(c)=\beta^*(c)$ if $c$ is bounded and $\mu$ is separable;

\item[(iv)] $\alpha(c)=\beta(c)$ and $\alpha^*(c)=\beta^*(c)$ if all the sections of $c$ are continuous and at least one of $\mu$ and $\nu$ is separable.

\end{itemize}

\end{theorem}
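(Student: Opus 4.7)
My plan is to prove (iii) first, using a uniform Lipschitz regularization of $c$ in order to invoke Theorem \ref{sw4rf}, then to derive (i) and (ii) from (iii) by truncating $c$ and applying Lemma \ref{chi55}, and finally to obtain (iv) by combining (i) and (ii).

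For (iii), assume $|c|\le K$. Since $\mu$ is separable, I would fix a separable $A\in\mathcal{F}$ with $\mu(A)=1$ and a countable dense $D\subset A$, and define the Moreau envelopes
\[
\phi_n(x,y)=\sup_{z\in D}\bigl\{c(z,y)-n\,d(x,z)\bigr\}\ \text{for }x\in A,\qquad \phi_n(x,y)=c(x,y)\ \text{for }x\notin A,
\]
together with the dual $\psi_n(x,y)=\inf_{z\in D}\{c(z,y)+n\,d(x,z)\}$ on $A$, $\psi_n=c$ off $A$. These are $\mathcal{H}$-measurable and bounded by $K$, so $\phi_n,\psi_n\in M$; on $A$ the map $x\mapsto\phi_n(x,y)$ is $n$-Lipschitz uniformly in $y$; and boundedness of $c$ together with continuity of $c(\cdot,y)$ and density of $D$ in $A$ force $\phi_n\downarrow c$ and $\psi_n\uparrow c$ pointwise on all of $\mathcal{X}\times\mathcal{Y}$. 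Partitioning $A$ into Borel pieces $A_1,A_2,\ldots$ of diameter $\le\epsilon/n$ and setting $A_0=\mathcal{X}\setminus A$ (with $\mu(A_0)=0$) verifies condition (*) of Theorem \ref{sw4rf} for $\phi_n$ and $\psi_n$, giving $\alpha(\phi_n)=\beta(\phi_n)$ and $\alpha^*(\psi_n)=\beta^*(\psi_n)$. Finally, $\phi_n\ge c$ pointwise yields $\alpha(\phi_n)\ge\alpha(c)$, while Lemma \ref{chi55} applied to $-\phi_n\uparrow -c$ gives $\beta(\phi_n)\to\beta(c)$; hence $\alpha(c)\le\lim_n\alpha(\phi_n)=\lim_n\beta(\phi_n)=\beta(c)\le\alpha(c)$. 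The symmetric argument with $\psi_n$ yields $\alpha^*(c)=\beta^*(c)$.

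For (i) I would take $c_k=c\wedge k$; since $c\ge f_1+g_1\ge -(|f_1|+|g_1|)\in L$, one has $-(|f_1|+|g_1|)\le c_k\le k$, so $c_k\in M$ is bounded and has continuous $x$-sections. Applying (iii) to $c_k$ gives $\alpha^*(c_k)=\beta^*(c_k)$, Lemma \ref{chi55} yields $\beta^*(c_k)\to\beta^*(c)$, and the identity $\sup_k\sup_P P(c_k)=\sup_P\lim_k P(c_k)=\alpha^*(c)$ (by monotone convergence) gives $\alpha^*(c_k)\to\alpha^*(c)$. Part (ii) is identical with $c_k=c\vee(-k)$ and Lemma \ref{chi55} applied to $-c_k\uparrow -c$. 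For (iv), by interchanging the roles of $\mathcal{X}$ and $\mathcal{Y}$ (invoking condition (**) of Theorem \ref{sw4rf}) if only $\nu$ is separable, assume $\mu$ is separable. Apply (ii) to $c\wedge k$ (bounded above, continuous $x$-sections, in $M$) to get $\alpha(c\wedge k)=\beta(c\wedge k)$. Since $c\wedge k\le c$ one has $\beta(c\wedge k)\le\beta(c)$; and the splitting argument of Lemma \ref{gh7t} applied to $(f_2+g_2-k)^+$ (with $f_2,g_2\ge 0$ WLOG) shows $\sup_P P(c-c\wedge k)\to 0$, so $\alpha(c\wedge k)\to\alpha(c)$. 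Combining, $\alpha(c)=\lim\alpha(c\wedge k)=\lim\beta(c\wedge k)\le\beta(c)\le\alpha(c)$. Applying (i) to $c\vee(-k)$ in exactly the same way yields $\alpha^*(c)=\beta^*(c)$.

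The principal difficulty is the construction in (iii): the approximants $\phi_n,\psi_n$ must simultaneously be jointly $\mathcal{H}$-measurable, Lipschitz in $x$ with a constant independent of $y$ (so that condition (*) is verifiable), and pointwise convergent to $c$ on all of $\mathcal{X}\times\mathcal{Y}$ (so that Lemma \ref{chi55} applies without modification). The device of setting $\phi_n=c$ off the separable support $A$ reconciles these demands, since the uniform Lipschitz bound in (*) is required only on the cells $A_1,A_2,\ldots\subset A$, whereas the exceptional set $A^c$ is absorbed into the $\mu$-null cell $A_0$.
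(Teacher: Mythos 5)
Your proof is correct. For parts (i)--(iii) it is essentially the paper's argument: the paper likewise fixes a separable $A\in\mathcal{F}$ with $\mu(A)=1$ and a countable dense $D\subset A$, forms the Lipschitz inf-convolution $c_n(x,y)=\inf_{z\in D}\{n\,d(x,z)+c(z,y)\}$, verifies condition (*) with $A_0=A^c$, and combines Theorem \ref{sw4rf} with Lemma \ref{chi55}; the only structural difference is the ordering (the paper proves (i) directly for $c$ bounded below and deduces (ii)--(iii) from the symmetry $c\mapsto -c$, whereas you prove (iii) first and recover (i)--(ii) by truncation, an extra but correct limiting step). One point of hygiene: setting $\phi_n=c$ off $A$ does not by itself make $\phi_n$ an element of $M$, since only $c\,1_{A\times\mathcal{Y}}$ is guaranteed to be $\mathcal{H}$-measurable; the paper works with $c\,1_A$ and $c_n\,1_A$ throughout and appeals to the remark preceding the theorem, and your argument should be phrased the same way (your closing paragraph shows you are aware of the device). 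Part (iv) is where you genuinely diverge. The paper gives a separate construction: it takes $\psi+g\le c$, sets $f(x)=\inf_y\{c(x,y)-g(y)\}$ (upper-semicontinuous and integrable), and regularizes $c-f$ by an inf-convolution over all of $\mathcal{X}$, using continuity of the $y$-sections to keep $y\mapsto c_n(x,y)$ measurable. You instead reduce (iv) to (i)--(ii) by vertical truncation, $c\wedge k$ and $c\vee(-k)$, controlling $\sup_P P(c-c\wedge k)$ by the same uniform-integrability estimate used in Lemmas \ref{chi55} and \ref{gh7t} (namely $(f+g)1_{\{f+g>2k\}}\le 3f1_{\{f>k\}}+3g1_{\{g>k\}}$ for $f,g\ge 0$). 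This is simpler, and it buys something real: your derivation of (iv) never invokes continuity of the $y$-sections, so when $\mu$ is separable it yields \eqref{goal} for every $c\in M$ with continuous $x$-sections and merely measurable $y$-sections, i.e.\ it removes the boundedness hypotheses from (i)--(iii) and weakens the hypothesis of (iv). The two inequalities it rests on, $\beta(c\wedge k)\le\beta(c)$ and $\alpha(c\wedge k)\rightarrow\alpha(c)$ (together with their $\alpha^*,\beta^*$ counterparts for $c\vee(-k)$), do hold, so this is a genuine, if modest, strengthening of the theorem as stated.
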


\begin{proof}
Since (ii) and (iii) are consequences of (i), it suffices to prove (i) and (iv).

Let $\mu$ and $c$ be as in (i). Since $\mu$ is separable, there is a separable set $A\in\mathcal{F}$ with $\mu(A)=1$. Since $x\mapsto c(x,y)$ is continuous, $y\mapsto c(x,y)$ is Borel measurable and $A$ is separable, the restriction of $c$ on $A\times\mathcal{Y}$ is measurable with respect to $\mathcal{B}(A)\otimes\mathcal{B}(\mathcal{Y})$. Therefore, $c\,1_A$ is $\mathcal{H}$-measurable.

Take a countable set $D\subset A$ such that $\overline{D}=\overline{A}$ and define
\begin{gather*}
c_n(x,y)=\inf_{z\in D}\bigl\{n\,d(x,z)+c(z,y)\bigr\}
\end{gather*}
where $n\ge 1$, $(x,y)\in\mathcal{X}\times\mathcal{Y}$ and $d$ is the distance on $\mathcal{X}$.

Since $c$ is bounded below, $c_n$ is real-valued, and a direct calculation shows that
\begin{gather}\label{pij8y7}
\sup_{y\in\mathcal{Y}}\,\abs{c_n(x,y)-c_n(z,y)}\le n\,d(x,z)\,\,\,\text{ for all }x,\,z\in\mathcal{X}.
\end{gather}
Since $D$ is countable, $y\mapsto c_n(x,y)$ is Borel measurable. Hence, $c_n\,1_A$ is $\mathcal{H}$-measurable. In addition, $c_n\le c_{n+1}$ and $c_n$ meets condition \eqref{new7} (since $c$ meets \eqref{new7} and is bounded below). Finally, since $x\mapsto c(x,y)$ is continuous, one obtains
\begin{gather*}
c(x,y)=\sup_nc_n(x,y)\quad\text{for all }(x,y)\in\overline{A}\times\mathcal{Y}.
\end{gather*}

Next, $\overline{D}=\overline{A}$ implies $A\subset\bigcup_{x\in D}B(x,\delta)$ for each $\delta>0$, where $B(x,\delta)$ is the $\mathcal{X}$-ball of radius $\delta$ around $x$. Given $n\ge 1$ and $\epsilon>0$, it follows that $A$ can be partitioned into sets $A_1,A_2,\ldots\in\mathcal{F}$ whose diameter is less than $\epsilon/n$. Hence, $c_n$ meets condition (*) (with $A_0=A^c$) because of \eqref{pij8y7}. By Lemma \ref{chi55} and Theorem \ref{sw4rf},
\begin{gather*}
\alpha^*(c)=\alpha^*(c\,1_A)\le\beta^*(c\,1_A)=\lim_n\beta^*(c_n\,1_A)
\\=\lim_n\alpha^*(c_n\,1_A)\le\alpha^*(c\,1_A)=\alpha^*(c).
\end{gather*}
This concludes the proof of (i).

Let us turn to (iv). Suppose that all the $c$-sections are continuous. Since the sections of $-c$ are continuous as well, it suffices to prove $\alpha^*(c)=\beta^*(c)$. We first assume $\mu$ separable.

By \eqref{new7}, there are $\psi\in L_1(\mu)$ and $g\in L_1(\nu)$ such that $\psi+g\le c$. Define
\begin{gather*}
f(x)=\inf_{y\in\mathcal{Y}}\bigl\{c(x,y)-g(y)\bigr\},\quad x\in\mathcal{X},
\end{gather*}
and note that $f\in L_1(\mu)$, $f+g\le c$ and $f$ is upper-semicontinuous. Define also
\begin{gather*}
c_n(x,y)=\inf_{z\in\mathcal{X}}\bigl\{n\,d(x,z)+c(z,y)-f(z)\bigr\}.
\end{gather*}

Again, condition \eqref{pij8y7} holds, $c_n$ meets condition \eqref{new7} (since $g\le c_n\le c-f$) and $y\mapsto c_n(x,y)$ is Borel measurable (it is in fact upper-semicontinuous). On noting that $x\mapsto c(x,y)-f(x)$ is lower-semicontinuous, it is not hard to see that $c_n\uparrow c-f$ pointwise as $n\rightarrow\infty$. Because of \eqref{pij8y7} and $\mu$ separable, $c_n$ meets condition (*). Thus,
\begin{gather*}
\beta^*(c)-\mu(f)=\beta^*(c-f)=\lim_n\beta^*(c_n)
\\=\lim_n\alpha^*(c_n)\le\alpha^*(c-f)=\alpha^*(c)-\mu(f).
\end{gather*}
Hence, $\alpha^*(c)=\beta^*(c)$ if $\mu$ is separable.

Finally, if $\nu$ is separable, it suffices to let
\begin{gather*}
c_n(x,y)=\inf_{z\in\mathcal{Y}}\bigl\{n\,\rho(y,z)+c(x,z)-g(z)\bigr\}
\end{gather*}
where now $g$ is upper-semicontinuous and $\rho$ is the distance on $\mathcal{Y}$. Arguing as above and using separability of $\nu$, it follows that $c_n$ meets condition (**) and $c_n\uparrow c-g$ pointwise as $n\rightarrow\infty$. Hence, $\alpha^*(c)=\beta^*(c)$ and this concludes the proof.
\end{proof}

\label{0.2cm}

Once again, the roles of $\mu$ and $\nu$ can be interchanged in Theorem \ref{v6y}.

\begin{theorem}\label{h8u9i}
Suppose $c$ satisfies condition \eqref{new7}, the map $x\mapsto c(x,y)$ is $\mathcal{F}$-measurable for each $y\in\mathcal{Y}$ and the map $y\mapsto c(x,y)$ is continuous for each $x\in\mathcal{X}$. Then,

\begin{itemize}

\item[(j)] $\alpha^*(c)=\beta^*(c)$ if $c$ is bounded below and $\nu$ is separable;

\item[(jj)] $\alpha(c)=\beta(c)$ if $c$ is bounded above and $\nu$ is separable;

\item[(jjj)] $\alpha(c)=\beta(c)$ and $\alpha^*(c)=\beta^*(c)$ if $c$ is bounded and $\nu$ is separable.

\end{itemize}

\end{theorem}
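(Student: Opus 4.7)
The plan is to mirror the proof of Theorem \ref{v6y}, swapping the roles of $\mathcal{X}$ and $\mathcal{Y}$ (and of $\mu$ and $\nu$). Part (jj) follows from (j) by replacing $c$ with $-c$, which satisfies the same continuity, measurability and boundedness hypotheses with ``bounded below'' in place of ``bounded above'', and using $\alpha(c)=-\alpha^*(-c)$ and $\beta(c)=-\beta^*(-c)$. Part (jjj) is then just the conjunction of (j) and (jj). So everything reduces to proving (j).

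To prove (j), fix a separable $B\in\mathcal{G}$ with $\nu(B)=1$, which exists by separability of $\nu$. Since $y\mapsto c(x,y)$ is continuous, $x\mapsto c(x,y)$ is $\mathcal{F}$-measurable and $B$ is separable, the restriction of $c$ to $\mathcal{X}\times B$ is $\mathcal{F}\otimes\mathcal{B}(B)$-measurable by the standard Carathéodory argument, so $c\,1_{\mathcal{X}\times B}$ is $\mathcal{H}$-measurable; by the remark preceding Theorem \ref{v6y} (which observed that $\alpha$, $\alpha^*$, $\beta$, $\beta^*$ are unchanged when $c$ is replaced by $c\,1_{A\times B}$ on a full-measure rectangle), it suffices to work with $c\,1_{\mathcal{X}\times B}$. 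Pick a countable dense $D\subset B$ and, letting $\rho$ denote the distance on $\mathcal{Y}$, set
\begin{gather*}
c_n(x,y)=\inf_{z\in D}\bigl\{n\,\rho(y,z)+c(x,z)\bigr\},\qquad n\ge 1.
\end{gather*}
Because $c$ is bounded below, $c_n$ is real-valued; a direct calculation gives
\begin{gather*}
\sup_{x\in\mathcal{X}}\,\abs{c_n(x,y)-c_n(x,y')}\le n\,\rho(y,y')\quad\text{for all }y,y'\in\mathcal{Y},
\end{gather*}
so $y\mapsto c_n(x,y)$ is Lipschitz (in particular Borel measurable), and since $D$ is countable, $x\mapsto c_n(x,y)$ is $\mathcal{F}$-measurable; consequently $c_n\,1_{\mathcal{X}\times B}$ is $\mathcal{H}$-measurable. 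One also checks $c_n\le c_{n+1}$ and that $c_n$ still satisfies \eqref{new7} (using the lower bound on $c$). Finally, continuity of $y\mapsto c(x,y)$ together with density of $D$ in $B$ yields $c_n(x,y)\uparrow c(x,y)$ for all $(x,y)\in\mathcal{X}\times\overline{B}$.

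The Lipschitz bound above, combined with separability of $B$ (so that $B$ is partitioned into countably many $\mathcal{G}$-sets of arbitrarily small diameter, with complement $B^c$ of $\nu$-measure zero), shows that each $c_n$ satisfies condition (**). Hence Theorem \ref{sw4rf} (in its (**) form) gives $\alpha^*(c_n\,1_{\mathcal{X}\times B})=\beta^*(c_n\,1_{\mathcal{X}\times B})$ for every $n$, and Lemma \ref{chi55} applied to the increasing sequence $c_n\,1_{\mathcal{X}\times B}\uparrow c\,1_{\mathcal{X}\times B}$ yields $\beta^*(c\,1_{\mathcal{X}\times B})=\lim_n\beta^*(c_n\,1_{\mathcal{X}\times B})$. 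Stringing this together,
\begin{gather*}
\alpha^*(c)=\alpha^*(c\,1_{\mathcal{X}\times B})\le\beta^*(c\,1_{\mathcal{X}\times B})=\lim_n\beta^*(c_n\,1_{\mathcal{X}\times B})=\lim_n\alpha^*(c_n\,1_{\mathcal{X}\times B})\le\alpha^*(c\,1_{\mathcal{X}\times B})=\alpha^*(c),
\end{gather*}
which proves (j). The only point requiring any real care is the joint measurability of $c\,1_{\mathcal{X}\times B}$ and $c_n\,1_{\mathcal{X}\times B}$, which one handles as in the proof of (i) in Theorem \ref{v6y} by invoking continuity in one variable and separability of the ``large'' set in the other; the rest is a transcription of that proof with $\mathcal{X}$ and $\mathcal{Y}$ interchanged.
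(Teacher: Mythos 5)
Your proposal is correct and is exactly the argument the paper intends: the paper omits the proof of Theorem \ref{h8u9i}, stating only that ``the roles of $\mu$ and $\nu$ can be interchanged in Theorem \ref{v6y},'' and your write-up is the faithful symmetric transcription of the proof of part (i) of that theorem (inf-convolution in the $y$-variable over a countable dense subset of a separable full-measure set $B$, joint measurability of $c\,1_{\mathcal{X}\times B}$, condition (**) in place of (*), then Lemma \ref{chi55} and Theorem \ref{sw4rf}), with (jj) and (jjj) reduced to (j) via $c\mapsto -c$ just as (ii) and (iii) are reduced to (i).
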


\vspace{0.2cm}

Note that, if $\mu$ and $\nu$ are both separable, then $\alpha(c)=\beta(c)$ and $\alpha^*(c)=\beta^*(c)$ provided $c$ is bounded, $\mathcal{H}$-measurable, and at least one of the $c$-sections is continuous. Further, the argument underlying Theorems \ref{v6y}-\ref{h8u9i} yields other similar results. As an example, we state (without a proof) the following.

\begin{theorem}\label{a55v7}
Suppose $c$ satisfies condition \eqref{new7} and at least one of $\mu$ and $\nu$ is separable. Then, $\alpha^*(c)=\beta^*(c)$ if $c$ is lower-semicontinuous (with respect to the product topology on $\mathcal{X}\times\mathcal{Y}$) and $\alpha(c)=\beta(c)$ if $c$ is upper-semicontinuous.
\end{theorem}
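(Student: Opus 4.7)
The plan is to reduce the claim to the continuous-sections case of Theorem~\ref{v6y}(iv) by a double approximation, combining truncation from below with a joint Moreau--Yosida regularization, and then applying Lemma~\ref{chi55}. Since $\alpha^*(c)=-\alpha(-c)$ and $\beta^*(c)=-\beta(-c)$, and $-c$ is lower-semicontinuous if and only if $c$ is upper-semicontinuous, the two claims are equivalent; I treat only the LSC case and show $\alpha^*(c)=\beta^*(c)$, and I assume without loss of generality that $\mu$ is separable.

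For each $N\ge 1$, set $c_N=c\vee(-N)$. Being the maximum of an LSC function and a constant, $c_N$ is LSC; and $0\le c_N-c=(-N-c)^+\le c^-\le f_1^-+g_1^-$, which ensures $c_N\in M$. For each $k\ge 1$, consider the joint inf-convolution
\[
c_N^k(x,y)=\inf_{(x',y')\in\mathcal{X}\times\mathcal{Y}}\bigl\{c_N(x',y')+k\bigl[d(x,x')+\rho(y,y')\bigr]\bigr\}.
\]
By standard Moreau--Yosida theory (using that $c_N$ is LSC and bounded below by $-N$), $c_N^k\uparrow c_N$ pointwise as $k\to\infty$. Each $c_N^k$ is $k$-Lipschitz jointly in $(x,y)$, so all its sections are continuous, and $-N\le c_N^k\le c_N$. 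Fixing a separable $A\in\mathcal{F}$ with $\mu(A)=1$, the restriction of $c_N^k$ to $A\times\mathcal{Y}$ is a Carath\'eodory function (continuous in each variable), hence jointly $\mathcal{B}(A)\otimes\mathcal{B}(\mathcal{Y})$-measurable; by the remark at the start of Subsection~\ref{a3q} this suffices for Theorem~\ref{v6y}(iv) to apply to $c_N^k$, giving $\alpha^*(c_N^k)=\beta^*(c_N^k)$. Lemma~\ref{chi55} then yields
\[
\beta^*(c_N)=\lim_k\beta^*(c_N^k)=\lim_k\alpha^*(c_N^k)\le\alpha^*(c_N)\le\beta^*(c_N),
\]
and hence $\alpha^*(c_N)=\beta^*(c_N)$.

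It remains to let $N\to\infty$. Since $\{c<-N\}\subset\{f_1^-+g_1^->N\}$, the splitting $1_{\{\phi+\psi>N\}}\le 1_{\{\phi>N/2\}}+1_{\{\psi>N/2\}}$ combined with the inequality $a\,1_{\{b>k\}}\le a\,1_{\{a>k\}}+b\,1_{\{b>k\}}$ used in the proofs of Lemmas~\ref{chi55} and~\ref{gh7t} gives
\[
0\le c_N-c\le 3f_1^-\,1_{\{f_1^->N/2\}}+3g_1^-\,1_{\{g_1^->N/2\}},
\]
whose right-hand side is in $L$ and has $\mu+\nu$-integrals vanishing as $N\to\infty$. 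This dominates $\sup_{P\in\Gamma(\mu,\nu)}P(c_N-c)$ and $\beta^*(c_N-c)$ uniformly in $N$, so $\alpha^*(c_N)\to\alpha^*(c)$ and $\beta^*(c_N)\to\beta^*(c)$; the equality $\alpha^*(c)=\beta^*(c)$ follows.

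The main obstacle is the unboundedness of $c$ from below: applied to $c$ itself the joint inf-convolution may be identically $-\infty$, so the truncation step $c_N=c\vee(-N)$ and the tail estimate above (of exactly the form already used in Lemmas~\ref{chi55} and~\ref{gh7t}) are the essential ingredients that make the argument go through.
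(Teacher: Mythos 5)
Your proof is correct. Note that the paper states Theorem \ref{a55v7} without proof, merely asserting that ``the argument underlying Theorems \ref{v6y}--\ref{h8u9i}'' yields it; your argument is a faithful implementation of that blueprint (joint Moreau--Yosida regularization to reach the continuous-sections case of Theorem \ref{v6y}(iv), Lemma \ref{chi55} for the increasing limit in $k$, and the remark at the start of Subsection \ref{a3q} for the measurability of the regularized functions on $A\times\mathcal{Y}$). The one genuinely new ingredient you supply is the truncation $c_N=c\vee(-N)$ together with the tail estimate $0\le c_N-c\le 3f_1^-1_{\{f_1^->N/2\}}+3g_1^-1_{\{g_1^->N/2\}}$: this is needed because the device used in the paper's proof of part (iv) --- subtracting $f(x)=\inf_y\{c(x,y)-g(y)\}$ to bound the cost below before inf-convolving --- relies on the sections being continuous (so that $f$ is upper-semicontinuous), and does not carry over to a merely jointly lower-semicontinuous $c$; your handling of this point is correct.
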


Finally, we list some consequences of Theorems \ref{v6y}-\ref{a55v7}. Indeed,  they unify and slightly improve some known results.

\vspace{0.2cm}

\begin{itemize}

\item Theorems \ref{v6y}-\ref{h8u9i} improve \cite{RR1995}, the result by Ramachandran and Ruschendorf, provided $c$ satisfies some conditions and
\begin{gather*}
\text{card}(\mathcal{X})\le\text{card}(\mathbb{R})\quad\text{and}\quad\text{card}(\mathcal{Y})\le\text{card}(\mathbb{R}).
\end{gather*}
Under such cardinality assumption, in fact, perfectness implies separability but not conversely; see Section \ref{prel}.

\item As an example, suppose $c\in M$ and $\mathcal{X}$ and $\mathcal{Y}$ are separable metric spaces (so that $\mu$ and $\nu$ are both separable and the cardinality assumption is satisfied). Then, \cite{RR1995} implies $\alpha(c)=\beta(c)$ and $\alpha^*(c)=\beta^*(c)$ provided at least one between $\mu$ and $\nu$ is perfect. Instead, Theorems \ref{v6y}-\ref{h8u9i} lead to the same conclusions whenever all the $c$-sections are continuous, or whenever $c$ is bounded and at least one of the $c$-sections is continuous.

\vspace{0.2cm}

\item By Theorems \ref{v6y}-\ref{h8u9i}, it is consistent with the usual axioms of set theory (ZFC) that condition \eqref{goal} holds for every $c\in M$ with continuous sections, or for every bounded $c\in M$ with at least one continuous section. In fact, as noted in Section \ref{prel}, it is consistent with ZFC that any Borel probability on any metric space is separable.

\vspace{0.2cm}

\item Let $\mathcal{X}=\mathcal{Y}$ and $c=d$, where $d$ is the distance on $\mathcal{X}$. Suppose $d$ measurable with respect to $\mathcal{B}(\mathcal{X})\otimes\mathcal{B}(\mathcal{X})$ and
\begin{gather*}
\int d(x,x_0)\,\mu(dx)+\int d(x,x_0)\,\nu(dx)<\infty\quad\text{for some }x_0\in\mathcal{X}.
\end{gather*}
Then, $\alpha(d)$ reduces to Wasserstein distance between $\mu$ and $\nu$ while $\beta(d)$ can be written as
\begin{gather*}
\beta(d)=\sup_f\,\abs{\mu(f)-\nu(f)}
\end{gather*}
where $\sup$ is over the 1-Lipschitz functions $f:\mathcal{X}\rightarrow\mathbb{R}$. In this case, it is well known that $\alpha(d)=\beta(d)$ if $\mathcal{X}$ is separable; see e.g. \cite[page 400]{KEL}. This known fact is generalized by Theorems \ref{v6y}-\ref{a55v7} under two respects: separability of $\mathcal{X}$ can be weakened into separability of at least one of $\mu$ and $\nu$, and $d$ can be replaced by any upper-semicontinuous function or by any function with continuous sections.

\vspace{0.2cm}

\item By Theorem \ref{a55v7}, Arveson's question has a positive answer if $H$ is open and one of $\mu$ and $\nu$ is separable.

\end{itemize}

\end{document}